\newtheorem{theorem}{Theorem}[section]
\newtheorem{corollary}{Corollary}
\newtheorem{lemma}[theorem]{Lemma}
\newtheorem{proposition}{Proposition}
\theoremstyle{definition}
\title[Neumann heat kernels] {Observations on Gaussian upper bounds\\  for Neumann heat kernels}
\author[Mourad Choulli, Laurent Kayser and El Maati Ouhabaz]{}
\subjclass{Primary: 35K08}
 \keywords{Heat kernels, Gaussian bounds, Neumann Laplacian, Riemannian manifolds.}
 \email{mourad.choulli@univ-lorraine.fr}
 \email{laurent.kayser@univ-lorraine.fr}
 \email{Elmaati.Ouhabaz@math.u-bordeaux.fr}
\thanks{The research of E.M.O.  was partially supported by the ANR
project HAB, ANR-12-BS01-0013-02}
\begin{document}
\maketitle

\centerline{\scshape Mourad Choulli, Laurent Kayser}
\medskip
{\footnotesize
\centerline{Institut \'Elie Cartan de Lorraine, UMR CNRS 7502, Universit\'e de Lorraine}
   \centerline{Boulevard des Aiguillettes, BP 70239, 54506 Vandoeuvre les Nancy cedex -}
   \centerline{ Ile du Saulcy, 57045 Metz cedex 01, France}
} 

\medskip

\centerline{\scshape El Maati Ouhabaz}
\medskip
{\footnotesize
 \centerline{Institut Math\'ematiques de Bordeaux, UMR CNRS 5251}
   \centerline{Universit\'e de Bordeaux, 351 Cours de la Lib\'eration}
   \centerline{F-33405 Talence, France}
}

\bigskip

\begin{abstract}
Given a domain $\Omega$ of a complete Riemannian manifold $\mathcal{M}$ and define $\mathcal{A}$ to be the Laplacian with Neumann boundary condition on $\Omega$. We prove that, under appropriate conditions, the corresponding heat kernel satisfies the  Gaussian upper bound
 \[
h(t,x,y)\leq \frac{C}{\left[V_\Omega(x,\sqrt{t})V_\Omega (y,\sqrt{t})\right]^{1/2}}\left( 1+\frac{d^2(x,y)}{4t}\right)^{\delta}e^{-\frac{d^2(x,y)}{4t}},\;\; t>0,\; x,y\in \Omega .
\]
Here $d$ is the geodesic distance on $\mathcal{M}$, $V_\Omega (x,r)$ is the Riemannian volume of $B(x,r)\cap \Omega$, where $B(x,r)$ is the geodesic ball of center $x$ and radius $r$, and $\delta$ is a constant related to the doubling property of $\Omega$.

\smallskip
As a consequence we obtain  analyticity of the semigroup $e^{-t {\mathcal A}}$ on $L^p(\Omega)$ for all $p \in [1, \infty)$ as well as a spectral multiplier result.
\end{abstract}


\section{Introduction and main results}\label{section1}

This short note is devoted to the Gaussian upper bound for  the heat kernel of the Neumann Laplacian.  Let us start with the Euclidean setting in which $\Omega$ is a bounded Lipschitz domain of $\mathbb{R}^n$. Let $\Delta_N$ be the  Neumann Laplacian. It is well known  that the corresponding heat kernel  $h(t,x,y)$ satisfies
\begin{equation}\label{gau1}
0 \le h(t,x,y) \le C t^{-n/2} e^{t} e^{-c\frac{ |x-y|^2}{t}} ,\;\;  t > 0,\;  x, y \in \Omega.
\end{equation}
 One can replace the extra term $e^t$ by $(1 + t)^{n/2}$ but the decay $h(t,x,y) \le Ct^{-n/2}$ cannot hold for large $t$ 
 since $e^{t \Delta_N} 1 = 1$. We refer to the monographs \cite{Da} or \cite{Ou} for more details.
 
 \smallskip
 In  applications, for example when applying the Gaussian bound to obtain  spectral multiplier results one can apply  \eqref{gau1}
 to $-\Delta_N + I$ (or $\epsilon I$ for any $\epsilon > 0$) and not to $-\Delta_N$. It is annoying to add the identity operator especially it is not clear how the functional calculus for $-\Delta_N$ can be related to that of $-\Delta_N + I$. The same problem  occurs for analyticity of the semigroup $e^{t\Delta_N}$ on $L^p(\Omega)$ for $p \in [1, \infty)$. One obtains from \eqref{gau1} analyticity of the semigroup but not a bounded analytic semigroup.  This   boundedness (on sectors  of the right half plane)  is important in order to obtain appropriate estimates for the resolvent  or for the time derivatives of the  solution to the corresponding evolution equation on $L^p$. 
 In this note we will show in an elementary way how one can resolve this question. The 
 idea is  that \eqref{gau1} can be improved into a Gaussian upper bound of the type
 \begin{equation}\label{gau2}
h(t,x,y)\leq \frac{C}{\left[V_\Omega(x,\sqrt{t})V_\Omega (y,\sqrt{t})\right]^{1/2}}e^{-c \frac{|x-y|^2}{t}},\;\;  t>0,\; x,y\in \Omega,
\end{equation}
where $V_\Omega(x,r)$ denotes the volume of $\Omega \cap B(x, r)$ and  $B(x,r)$ is the open ball of center $x$ and radius $r$. There is no extra factor in \eqref{gau2} and one can use this estimate in various applications of Gaussian bounds  instead of \eqref{gau1}. 

\smallskip
We shall state most of the results for Lipschitz domains of general Riemannian manifolds. 

\smallskip
Let $(\mathcal{M}, g)$ be a complete Riemannian manifold of dimension $n$ without boundary. Let $\Omega$ be a subdomain of $\mathcal{M}$ with Lipschitz boundary $\Gamma$. That is, $\Gamma$ can be described in an appropriate local coordinates by means of graphs of Lipschitz functions. Specifically, for any $p\in \Gamma$, there exist a local chart $(U,\psi )$, $\psi:U\rightarrow \mathbb{R}^n$ with $\psi(p)=0$, a Lipschitz function $\lambda:\mathbb{R}^{n-1}\rightarrow \mathbb{R}$ with $\lambda (0)=0$ and $\epsilon >0$ such that
\begin{align*}
&\psi (U\cap \Omega )=\{(x',\lambda (x')+t);\; 0<t<\epsilon ,\; x'\in \mathbb{R}^{n-1},\; |x'|<\epsilon\},
\\
&\psi (U\cap \Gamma )=\{(x',\lambda (x'));\;  x'\in \mathbb{R}^{n-1},\; |x'|<\epsilon\}.
\end{align*}

We use  in this text Einstein summation convention for repeated indices. We recall that, in local coordinates $x=(x_1,\ldots ,x_n)$,
\[
g(x)=g_{ij}dx_i\otimes dx_j.
\]
If $f\in C^\infty (\mathcal{M})$,  the gradient of $f$ is the vector field given by
\[
\nabla f= g^{ij}\frac{\partial f}{\partial x_i}\frac{\partial}{\partial x_j}
\]
and the Laplace-Beltrami operator is the operator acting as follows
\[
\Delta f=|g|^{-1/2}\frac{\partial}{\partial x_i}\left( |g|^{1/2}g^{ij}\frac{\partial f}{\partial x_j}\right),
\]
where $(g^{ij})$ is the inverse of the metric $g$ and $|g|$ is the determinant of $g$.

\smallskip
Let $\mu$ be the Riemannian measure induced by the metric $g$. That is
\[
d\mu = |g|^{1/2}dx_1\ldots dx_n.
\]

We set $L^2(\Omega )=L^2(\Omega , d\mu)$. Let $H^1(\Omega )$ be the closure of $C_0^\infty (\overline{\Omega})$ with respect to the norm
\[
\|f\|_{H^1(\Omega )}=\left(\int_\Omega f(x)^2d\mu (x)+\int_\Omega |\nabla f(x)|^2d\mu (x)\right)^{1/2}.
\]
Here
\[
|\nabla f|^2=\langle \nabla f,\nabla f\rangle,
\]
where
\[
\langle \nabla f,\nabla g\rangle = g^{ij}\frac{\partial f}{\partial x_i}\frac{\partial g}{\partial x_j}.
\]

We consider on $L^2(\Omega )\times L^2(\Omega )$ the unbounded bilinear form 
\[
\mathfrak{a}(f,g)=\int_\Omega \langle \nabla f,\nabla g\rangle d\mu(x)
\]
with domain $D(\mathfrak{a})=H^1(\Omega )$.
 
\smallskip
Since $\Gamma$ is Lipschitz, the unit conormal $\nu \in T^\ast \mathcal{M}$ is defined a.e. with respect to the surface measure $d\sigma$. Let $\partial _\nu f=\langle \nabla f,\nu \rangle =g^{ij}\nu _i\frac{\partial f}{\partial x_j}$ and
\[
H_\Delta (\Omega )=\{f\in L^2(\Omega );\; \Delta f\in L^2(\Omega )\}.
\]
We recall the Green's formula
\[
\int_\Omega \langle \nabla f, \nabla g\rangle d\mu =-\int_\Omega \Delta fgd\mu +\int_\Gamma \partial _\nu fg d\sigma ,\;\; f\in C_0^\infty (\overline{\Omega}),\; g\in H^1(\Omega ).
\]
In light of this formula, we  define $\partial _\nu f$, $f\in H_\Delta (\Omega )$, as an element of $H^{-1/2}(\Gamma )$, the dual space of $H^{1/2}(\Gamma )$, by the following formula
\[
(\partial _\nu f,g)_{1/2}:=\int_\Omega \Delta fgd\mu + \int_\Omega \langle \nabla f, \nabla g\rangle d\mu ,\;\;  g\in H^1(\Omega ).
\]
Here $(\cdot ,\cdot )_{1/2}$ is the duality pairing between $H^{1/2}(\Gamma )$ and $H^{-1/2}(\Gamma )$.

\smallskip
We define the operator $\mathcal{A}u=-\Delta u$ with domain 
\[
D(\mathcal{A})=\{ u\in H_\Delta (\Omega );\; \partial _\nu u=0\}.
\]
Then it is straightforward to see that $\mathcal{A}$ is the operator associated to the form $\mathfrak{a}$.

\smallskip
Let $d$ be the geodesic distance and $B(x,r)$ be the geodesic ball with respect to $d$ of center $x\in \mathcal{M}$ and radius $r>0$, and set  $V(x,r)=\mu (B(x,r))$. 

\smallskip
We assume in what follows that $\mathcal{M}$ satisfies the volume doubling (abbreviated to $VD$ in the sequel) property: there exists $C>0$ so that
\[
V(x,2r)\leq CV(x,r),\;\; x\in \mathcal{M},\; r>0.
\]
We shall assume that the heat kernel $p(t,x,y)$ of the Laplacian on $\mathcal{M}$ satisfies the Gaussian upper bound
\begin{equation}\label{gauM}
p(t,x,y) \leq \frac{C}{\left[V(x,\sqrt{t})V(y,\sqrt{t})\right]^{1/2}} e^{-c\frac{d^2(x,y)}{t}},\;\; t>0,\; x,y\in \mathcal{M}
\end{equation}
in which $C$ and $c$ are positive constants. 

\smallskip
A typical example of a  manifold  which satisfies both properties is a manifold with non negative Ricci curvature. The volume doubling property is then an immediate consequence of Gromov-Bishop theorem. The Gaussian upper bound can be found in  \cite{LY}.

\smallskip
We define $V_\Omega$ by
\[
V_\Omega(x,r)=\mu \left( B(x,r)\cap \Omega \right),\;\; r>0,\; x\in \Omega .
\]
The main assumption on $\Omega$ is the following  variant of  the $VD$ property: there exist two constants $K>0$ and $\delta >0$ so that
\begin{equation}\label{vdi}
V_\Omega(x,s)\leq K\left( \frac{s}{r}\right)^\delta V_\Omega (x,r),\;\; 0<r\leq s ,\; x\in \Omega .
\end{equation}

Note that this doubling property holds for all bounded Lipschitz domains of $\mathbb{R}^n$ (with $\delta = n$). We shall discuss this in Section \ref{sec3}.

\smallskip
Most of the results we will refer to are valid for metric measure space with Borel measure. In our case this metric measure space is nothing else but $(\Omega ,d,\mu )$. Here, we keep the notations $d$ and $\mu$ for the distance and measure induced on $\Omega$ by $d$ on $\mathcal{M}$ and $\mu$ on $\mathcal{M}$.

\smallskip
Now we state our main results  which we formulate in  following theorem and the subsequent corollaries. 

\begin{theorem}\label{pthm}
(1) $-\mathcal{A}$ generates a symmetric Markov semigroup $e^{-t\mathcal{A}}$ with kernel $h\in C^\infty ((0,\infty )\times \Omega \times \Omega )$.
\\
(2) Suppose that $\mathcal{M}$  satisfies $VD$ and \eqref{gauM} and $\Omega$ satisfies the $VD$ property \eqref{vdi} and $\textrm{diam}\, (\Omega )<\infty$. Then  $h$ has the following Gaussian upper bound
\[
h(t,x,y)\leq \frac{C}{\left[V_\Omega(x,\sqrt{t})V_\Omega (y,\sqrt{t})\right]^{1/2}}\left( 1+\frac{d^2(x,y)}{4t}\right)^{\delta}e^{-\frac{d^2(x,y)}{4t}},\;\; t>0,\; x,y\in \Omega .
\]
\end{theorem}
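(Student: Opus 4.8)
The plan is to treat the two assertions separately. For (1), I would first observe that $\mathfrak{a}$ is a densely defined, symmetric, non-negative, closed form on $L^2(\Omega)$, the closedness being exactly the completeness of $H^1(\Omega)$ in its graph norm; hence it is associated to a self-adjoint operator generating a symmetric contraction semigroup $e^{-t\mathcal{A}}$. To see that this semigroup is Markovian I would verify the Beurling--Deny type criteria (see \cite{Ou}): for $u\in H^1(\Omega)$ one has $|u|$ and $(0\vee u)\wedge 1$ in $H^1(\Omega)$ with $|\nabla |u||\le |\nabla u|$ and $|\nabla((0\vee u)\wedge 1)|\le|\nabla u|$ pointwise a.e., whence $\mathfrak{a}(|u|)\le\mathfrak{a}(u)$ and $\mathfrak{a}((0\vee u)\wedge1)\le\mathfrak{a}(u)$, giving positivity and $L^\infty$-contractivity. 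Smoothness of the kernel on $(0,\infty)\times\Omega\times\Omega$ is a purely interior statement: on any neighborhood compactly contained in $\Omega$ the generator is a uniformly elliptic operator with smooth coefficients, so local parabolic regularity (hypoellipticity of $\partial_t+\mathcal{A}$ in the interior) furnishes a $C^\infty$ integral kernel there, independently of any global volume hypothesis.

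For (2) the strategy is to reduce the Gaussian bound to two ingredients and then invoke a known upgrading mechanism. The first ingredient is the Davies--Gaffney (finite-propagation-speed) estimate, which holds automatically because $\mathfrak{a}$ is a strongly local Dirichlet form whose intrinsic distance coincides with the geodesic distance $d$: for $f,g\in L^2(\Omega)$ supported in Borel sets $A,B$ one has $|\langle e^{-t\mathcal{A}}f,g\rangle|\le e^{-d(A,B)^2/4t}\|f\|_2\|g\|_2$, an estimate that already carries the sharp constant $1/4$. The second, and principal, ingredient is the on-diagonal bound
\[
h(t,x,x)\le \frac{C}{V_\Omega(x,\sqrt t)},\qquad t>0,\ x\in\Omega .
\]

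To obtain this on-diagonal bound I would transfer a relative Faber--Krahn (equivalently Nash-type) inequality from $\mathcal{M}$ to $\Omega$. By Grigor'yan's characterization, $VD$ together with \eqref{gauM} is equivalent to a relative Faber--Krahn inequality on $\mathcal{M}$. Since $\Gamma$ is Lipschitz, $\Omega$ is a $W^{1,2}$-extension domain, so there is a bounded extension operator $E:H^1(\Omega)\to H^1(\mathcal{M})$; composing $E$ with the Faber--Krahn inequality on $\mathcal{M}$ and exploiting the self-improving doubling hypothesis \eqref{vdi} on $\Omega$ produces the corresponding relative Faber--Krahn inequality for the Neumann form $\mathfrak{a}$ on the metric measure space $(\Omega,d,\mu)$, with the doubling exponent $\delta$ now playing the role of the dimension. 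Feeding this back into Grigor'yan's machinery yields the on-diagonal upper bound above.

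Finally I would combine the two ingredients. With $VD$ on $(\Omega,d,\mu)$ (immediate from \eqref{vdi}), the on-diagonal bound, and the Davies--Gaffney estimate in hand, the theorem of Coulhon--Sikora (obtained via the Phragm\'en--Lindel\"of / finite-speed method) upgrades the on-diagonal bound to the full Gaussian estimate with the sharp exponential constant $1/4$ and precisely the polynomial correction factor $(1+d^2(x,y)/4t)^{\delta}$ claimed in the statement; the finiteness of $\mathrm{diam}(\Omega)$ ensures the volumes $V_\Omega(x,\sqrt t)$ behave well for large $t$, saturating at $\mu(\Omega)$ consistently with $e^{-t\mathcal{A}}1=1$. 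I expect the main obstacle to be the on-diagonal bound, that is, the geometric transfer of the functional inequality across the Lipschitz boundary while tracking the doubling exponent $\delta$; once that is secured, the passage from a generic Gaussian constant to the sharp constant $1/4$ (at the cost of the polynomial factor) is a black-box application of the $VD$ + on-diagonal + Davies--Gaffney upgrading theorem.
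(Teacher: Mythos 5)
Your overall skeleton coincides with the paper's: part (1) via the Beurling--Deny criteria plus interior regularity, and part (2) via the three ingredients Davies--Gaffney estimate, on-diagonal upper bound, and the Coulhon--Sikora Phragm\'en--Lindel\"of upgrading (the paper invokes \cite[Corollary 5.4]{CS} for exactly the claimed factor $\left(1+\frac{d^2(x,y)}{4t}\right)^{\delta}e^{-d^2(x,y)/4t}$). The problem lies in the step you yourself identify as the crux, the on-diagonal bound, where your proposed mechanism has a genuine gap.

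The relative Faber--Krahn inequality is a \emph{support-constrained} statement: it estimates $\lambda_1(U)$ for functions vanishing outside an open set $U$ contained in a ball $B(x,r)$, with the ratio $V(x,r)/\mu(U)$ on the right-hand side. A bounded extension operator $\mathcal{E}\in\mathscr{B}(H^1(\Omega),H^1(\mathcal{M}))$ gives no control on $\mathrm{supp}(\mathcal{E}u)$: if $u$ is supported in $U\subset B(x,r)\cap\Omega$, the extension $\mathcal{E}u$ need not be supported in any ball of comparable radius, nor in a set of measure comparable to $\mu(U)$, so there is nothing to which the Faber--Krahn inequality on $\mathcal{M}$ can legitimately be applied; ``composing $E$ with the Faber--Krahn inequality'' is therefore not a defined operation. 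Trying to repair this with a cutoff $\chi$ supported in $B(x,2r)$ restores the ball constraint but ruins the measure ratio, since $\mu(\{\chi\mathcal{E}u\neq 0\})$ may be of order $V(x,2r)$, which degrades Faber--Krahn to a trivial Poincar\'e-type bound $c/r^2$. Making your route work would require an extension operator with locality and measure-of-support control, which is not among your hypotheses. The paper circumvents precisely this obstruction by transferring a \emph{support-free} functional inequality instead: the weighted Gagliardo--Nirenberg inequality of \cite[Theorem 1.2.1]{BCS}, $\|fV^{\frac12-\frac1q}(\cdot,r)\|_q\leq C\left(\|f\|_2+r\||\nabla f|\|_2\right)$, is applied to $\mathcal{E}f$, then restricted back to $\Omega$ using $V_\Omega(\cdot,r)\leq V(\cdot,r)$, with the stray factor $(1+r)$ absorbed because $V_\Omega(\cdot,r)$ stabilizes at $\mu(\Omega)$ for $r\geq\mathrm{diam}(\Omega)$ --- this is where finite diameter actually enters, not merely as large-time bookkeeping; the equivalence in \cite[Theorem 1.2.1]{BCS} (together with the Davies--Gaffney property) then yields the diagonal bound. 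So: right architecture, but the key transfer step must be run with a support-free (weighted Gagliardo--Nirenberg or Nash-type) inequality rather than Faber--Krahn.
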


Since $\rho \in [0,\infty )\rightarrow (1+\rho )^\delta e^{-\rho /2}$ is bounded function, an immediate consequence of Theorem \ref{pthm} is

\begin{corollary}\label{gub}
Suppose that $\mathcal{M}$ satisfies $VD$ and \eqref{gauM} and $\Omega$ satisfies the $VD$ property \eqref{vdi} and $\textrm{diam}\, (\Omega )<\infty$.  Then 
\[
h(t,x,y)\leq \frac{C}{\left[V_\Omega (x,\sqrt{t})V_\Omega (y,\sqrt{t})\right]^{1/2}}e^{-\frac{d^2(x,y)}{8t}},\;\; t>0,\; x,y\in \Omega .
\]
\end{corollary}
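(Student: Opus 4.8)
The plan is to deduce this estimate directly from the sharper bound furnished by Theorem \ref{pthm}(2), so no new analytic input is required; the entire content is an elementary absorption of the polynomial correction factor into the Gaussian exponent. All the hard work—the construction of the semigroup, the doubling hypothesis \eqref{vdi}, and the passage from \eqref{gauM} to the kernel bound with the factor $(1+d^2(x,y)/4t)^\delta$—is already carried out in the theorem, which I take as given.

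First I would introduce the abbreviation $\rho = \rho(t,x,y) = d^2(x,y)/4t$, so that Theorem \ref{pthm}(2) reads
\[
h(t,x,y)\leq \frac{C}{\left[V_\Omega(x,\sqrt{t})V_\Omega (y,\sqrt{t})\right]^{1/2}}\,(1+\rho)^{\delta}\,e^{-\rho},\qquad t>0,\; x,y\in \Omega .
\]
Next I would split the exponential symmetrically as $e^{-\rho}=e^{-\rho/2}\,e^{-\rho/2}$ and group the first half with the polynomial factor. The key elementary fact is that the single-variable function $\varphi(\rho)=(1+\rho)^{\delta}e^{-\rho/2}$ is continuous on $[0,\infty)$ and satisfies $\varphi(\rho)\to 0$ as $\rho\to\infty$ (exponential decay beats polynomial growth), so $\varphi$ attains a finite maximum on $[0,\infty)$. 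Writing $M=\sup_{\rho\geq 0}\varphi(\rho)<\infty$ and replacing $\rho$ by its value, I obtain
\[
(1+\rho)^{\delta}e^{-\rho}=\varphi(\rho)\,e^{-\rho/2}\leq M\,e^{-d^2(x,y)/8t}.
\]
Substituting this into the displayed bound and setting $C'=CM$ yields exactly the stated inequality with constant $C'$ renamed to $C$.

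There is no substantive obstacle here: the only point requiring a line of justification is the uniform boundedness of $\varphi$, which is immediate from the stated remark preceding the corollary that $\rho\mapsto(1+\rho)^\delta e^{-\rho/2}$ is bounded. I would simply record the value $e^{-\rho/2}=e^{-d^2(x,y)/8t}$ to make clear where the exponent $8$ in the denominator originates, and note that the constant $C$ in the conclusion depends on $\delta$ only through $M$, i.e.\ through the doubling exponent in \eqref{vdi}. This completes the deduction.
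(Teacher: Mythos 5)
Your proposal is correct and is exactly the paper's argument: the paper deduces Corollary \ref{gub} from Theorem \ref{pthm}(2) precisely by noting that $\rho \mapsto (1+\rho)^{\delta} e^{-\rho/2}$ is bounded on $[0,\infty)$, absorbing the polynomial factor into half of the exponential $e^{-\rho} = e^{-\rho/2}e^{-\rho/2}$ with $\rho = d^2(x,y)/4t$, which yields the exponent $d^2(x,y)/8t$. Nothing is missing; your write-up merely makes the paper's one-line remark explicit.
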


We note that for unbounded domains, Gaussian upper bounds for the Neumann heat kernel are proved in \cite{GS}.

\smallskip
Theorem \ref{pthm} (2) or its corollary has several consequences.

\begin{corollary}\label{ana}
Suppose that $\mathcal{M}$ satisfies $VD$ and \eqref{gauM} and $\Omega$ satisfies the $VD$ property \eqref{vdi} and $\textrm{diam}\, (\Omega )<\infty$. Then\\
(1) the semigroup $e^{-t\mathcal{A}}$ extends to a bounded holomorphic semigroup on $\mathbb{C}^+$ 
on $L^p(\Omega ,\mu )$ for all $p \in [1, \infty)$,
\\
(2) the spectrum of $\mathcal{A}$, viewed as an operator acting on $L^p(\Omega )$, $p\in [1, \infty )$, is independent of $p$.
\end{corollary}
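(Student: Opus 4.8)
The plan is to deduce both assertions from the Gaussian upper bound of Corollary~\ref{gub} through the standard machinery for Gaussian estimates (see \cite{Da,Ou}); the decisive feature is that this bound carries \emph{no} extra exponential or polynomial factor in $t$, which is precisely what turns holomorphy into \emph{bounded} holomorphy. For part~(1), I would first record the $L^2$ picture: the form $\mathfrak{a}$ is symmetric, densely defined and closed, so $\mathcal{A}$ is self-adjoint and nonnegative on $L^2(\Omega)$, and the spectral theorem shows that $e^{-z\mathcal{A}}$ is holomorphic on the open right half-plane $\mathbb{C}^+=\{\mathrm{Re}\,z>0\}$ with $\|e^{-z\mathcal{A}}\|_{\mathcal L(L^2)}\le 1$ there.

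The core step is to transfer the real Gaussian bound to complex time. Fix $\theta\in(0,\pi/2)$ and write $\Sigma_\theta=\{z\ne 0:\ |\arg z|\le\theta\}$. Combining the self-adjointness with Corollary~\ref{gub}, a standard analyticity/Phragm\'en--Lindel\"of argument (as in \cite{Da,Ou}) produces constants $C_\theta$ and $c_\theta>0$ such that the kernel $h_z$ of $e^{-z\mathcal{A}}$ satisfies
\[
|h_z(x,y)|\le\frac{C_\theta}{\left[V_\Omega(x,\sqrt{|z|})\,V_\Omega(y,\sqrt{|z|})\right]^{1/2}}\,e^{-c_\theta\frac{d^2(x,y)}{|z|}},\qquad x,y\in\Omega,\ z\in\Sigma_\theta.
\]
The doubling property \eqref{vdi} then gives
\[
\int_\Omega\frac{e^{-c_\theta d^2(x,y)/|z|}}{\left[V_\Omega(x,\sqrt{|z|})\,V_\Omega(y,\sqrt{|z|})\right]^{1/2}}\,d\mu(y)\le C_\theta',
\]
uniformly in $x\in\Omega$ and $z\in\Sigma_\theta$, and symmetrically after integrating in $x$. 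Hence $e^{-z\mathcal{A}}$ is bounded on $L^1$ and on $L^\infty$ with norm $\le C_\theta'$, and by the Riesz--Thorin theorem $\|e^{-z\mathcal{A}}\|_{\mathcal L(L^p)}\le C_\theta'$ for every $p\in[1,\infty]$, uniformly for $z\in\Sigma_\theta$.

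It remains to upgrade these uniform bounds to holomorphy on each $L^p$. Since $\mathrm{diam}\,(\Omega)<\infty$ and $\Omega$ is doubling, $\mu(\Omega)<\infty$, so the spaces $L^p(\Omega)$ are nested and $L^\infty(\Omega)$ is dense in each of them. For $f,\phi\in L^\infty\subset L^2$ the scalar map $z\mapsto\langle e^{-z\mathcal{A}}f,\phi\rangle$ is holomorphic on $\mathbb{C}^+$ by the $L^2$ theory; as $L^\infty$ is dense in $L^p$ and norming for its dual, the criterion for holomorphy of a locally bounded operator family (weak holomorphy on dense and norming subspaces) shows that $z\mapsto e^{-z\mathcal{A}}$ is a holomorphic $\mathcal L(L^p)$-valued map on $\Sigma_\theta$, bounded by $C_\theta'$. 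Letting $\theta\uparrow\pi/2$ yields, for every $p\in[1,\infty)$, a bounded holomorphic semigroup on $\mathbb{C}^+$, which is assertion~(1).

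For part~(2), the semigroup $(e^{-t\mathcal{A}})$ acts consistently on the scale $L^p(\Omega)$, $1\le p<\infty$, and satisfies the Gaussian upper bound of Corollary~\ref{gub}; these are exactly the hypotheses of the $L^p$-independence-of-spectrum principle for semigroups with Gaussian bounds (see \cite{Da,Ou}). Concretely, for $\lambda$ outside $[0,\infty)=\sigma(\mathcal{A}_2)$ the resolvent $(\lambda+\mathcal{A})^{-1}$ becomes an integral operator whose kernel inherits from Corollary~\ref{gub} bounds that make its $L^p$-boundedness equivalent to its $L^2$-boundedness; this gives $\sigma(\mathcal{A}_p)\subseteq\sigma(\mathcal{A}_2)$, and the consistency of the resolvents together with a density argument yields the reverse inclusion, whence $\sigma(\mathcal{A}_p)=\sigma(\mathcal{A}_2)$ for all $p\in[1,\infty)$. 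The main obstacle throughout is the complex-time step: one must reach every subsector $\Sigma_\theta$ with $\theta<\pi/2$ (to fill out $\mathbb{C}^+$), allowing $C_\theta,c_\theta$ to degenerate as $\theta\uparrow\pi/2$. This is where self-adjointness on $L^2$ is indispensable, and where the absence of an extra $t$-factor in Corollary~\ref{gub} is exactly what keeps the $L^p$ bounds uniform on each subsector rather than growing in $|z|$.
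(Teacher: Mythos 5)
Your proposal is correct and takes essentially the same route as the paper: the paper deduces assertion (1) from Corollary~\ref{gub} together with \cite[Corollary 7.5, page 202]{Ou} (originally \cite{Ou95}) and assertion (2) from Davies' $L^p$-spectral-independence theorem \cite{Dav} (see also \cite[Theorem 7.10, page 206]{Ou}), and your complex-time Phragm\'en--Lindel\"of/Schur-test/interpolation argument for (1) and resolvent-continuation sketch for (2) are precisely the proofs behind those citations, with the same key observation that the absence of an extra $t$-factor in Corollary~\ref{gub} yields \emph{bounded} holomorphy. The only soft spot is the reverse inclusion in (2), which is not a mere ``consistency plus density'' argument (in Davies' proof it rests on the duality $\sigma(\mathcal{A}_p)=\sigma(\mathcal{A}_{p'})$ for the symmetric kernel together with interpolation of consistent resolvents), but since you also invoke the cited theorem itself, this matches the paper's own level of detail.
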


Assertion (1) is a consequence of Corollary \ref{gub} combined with \cite[Corollary 7.5, page 202]{Ou}. It was originally proved in \cite{Ou95}. Assertion (2)  follows from a result in \cite{Dav} which asserts that a Gaussian upper bound implies $p$-independence  of the spectrum. See also  \cite[Theorem 7.10, page 206]{Ou} for the general form needed here. 

\smallskip
Let $(E_\lambda )$ be the spectral resolution of the non negative self-adjoint operator $\mathcal{A}$. We recall that for any bounded Borel function $f: [0,\infty )\rightarrow \mathbb{C}$, the operator $f(\mathcal{A})$ is defined by
\[
f(\mathcal{A})=\int _0^\infty f(\lambda )dE_\lambda .
\]
An operator $T$ on the measure space $(\Omega ,\mu)$ is said of weak type $(1,1)$ if
\[
\|T\|_{L^1(\Omega )\rightarrow L^1_w(\Omega )}:=\sup \{\lambda \mu (\{x\in \Omega ;\; |T\varphi (x)|>\lambda \});\; \lambda >0,\; \|\varphi \|_{L^1(\Omega )}=1\}<\infty .
\]

In light of \cite[Theorem 1.3, page 450 and Remark 1, page 451]{DOS}, another consequence of Corollary \ref{gub} is

\begin{corollary}\label{hc}
Suppose that $\mathcal{M}$ satisfies $VD$ and \eqref{gauM} and $\Omega$ satisfies the $VD$ property \eqref{vdi} and $\textrm{diam}\, (\Omega )<\infty$.  Let $s>\delta /2$, where $\delta$ is as in \eqref{vdi}, $\varphi \in C_0^\infty ((0,\infty))$ non identically equal to zero and $f:[0,\infty )\rightarrow \mathbb{C}$ a Borel function satisfying
\[
\sup_{t>0} \| \varphi (\cdot )f(t\cdot )\|_{W^{s,\infty}}<\infty .
\]
Then $f(\mathcal{A})$ is of weak type $(1,1)$  and bounded on $L^p(\Omega )$ for any $p\in (1,\infty )$. Additionally,
\[
\|f(\mathcal{A})\|_{L^1(\Omega )\rightarrow L^1_w(\Omega )}\leq C_s\left( \sup_{t>0} \| \varphi (\cdot )f(t\cdot )\|_{W^{s,\infty}}+|f(0)|\right).
\]
\end{corollary}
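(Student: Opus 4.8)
The plan is to deduce the statement directly from the spectral multiplier theorem of \cite[Theorem 1.3]{DOS}, so the task reduces to checking that $(\Omega,d,\mu)$ and $\mathcal{A}$ meet its hypotheses. The three ingredients needed there are a doubling metric measure space, a non-negative self-adjoint operator on its $L^2$-space, and a Gaussian upper bound for the associated heat semigroup; all of these are already at hand.

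First I would observe that $(\Omega,d,\mu)$ is doubling with homogeneous dimension $\delta$. Indeed, rewriting \eqref{vdi} with $s=\lambda r$, $\lambda\geq 1$, gives
\[
V_\Omega(x,\lambda r)\leq K\lambda^\delta V_\Omega(x,r),\qquad x\in\Omega,\ r>0,
\]
and in particular $V_\Omega(x,2r)\leq K2^\delta V_\Omega(x,r)$, which is the volume doubling property. Since $\mathrm{diam}(\Omega)<\infty$ and closed bounded sets in the complete manifold $\mathcal{M}$ are compact, one also has $\mu(\Omega)<\infty$; this finiteness will matter at the very end.

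Next I would assemble the operator input. By Theorem \ref{pthm}(1), $\mathcal{A}$ is a non-negative self-adjoint operator on $L^2(\Omega,\mu)$, and by Corollary \ref{gub} its kernel satisfies the Gaussian upper bound
\[
h(t,x,y)\leq \frac{C}{\left[V_\Omega(x,\sqrt t)V_\Omega(y,\sqrt t)\right]^{1/2}}e^{-\frac{d^2(x,y)}{8t}},\qquad t>0,\ x,y\in\Omega.
\]
This is exactly the estimate required to run \cite[Theorem 1.3]{DOS}, with the role of the homogeneous dimension played by $\delta$. Since $s>\delta/2$ by hypothesis, that theorem applies to any multiplier $g$ with $g(0)=0$ and yields both the weak type $(1,1)$ bound and the $L^p$-boundedness for $1<p<\infty$, together with $\|g(\mathcal{A})\|_{L^1(\Omega)\to L^1_w(\Omega)}\leq C_s\sup_{t>0}\|\varphi(\cdot)g(t\cdot)\|_{W^{s,\infty}}$.

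The one genuinely delicate point, and the reason the term $|f(0)|$ appears, is that $\mu(\Omega)<\infty$ forces $0$ to be an eigenvalue of $\mathcal{A}$: the constant functions lie in $L^2(\Omega)$ and belong to $D(\mathcal{A})$ with $\mathcal{A}\mathbf{1}=0$. Writing $P_0$ for the orthogonal projection onto $\ker\mathcal{A}$, I would split
\[
f(\mathcal{A})=f(0)P_0+\widetilde f(\mathcal{A}),\qquad \widetilde f:=f\,\mathbf{1}_{(0,\infty)}.
\]
Because $\varphi\in C_0^\infty((0,\infty))$ only probes $f$ on $(0,\infty)$, one has $\sup_{t}\|\varphi(\cdot)\widetilde f(t\cdot)\|_{W^{s,\infty}}=\sup_{t}\|\varphi(\cdot)f(t\cdot)\|_{W^{s,\infty}}$, so the previous paragraph controls $\widetilde f(\mathcal{A})$. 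Finally $P_0\varphi=\mu(\Omega)^{-1}\left(\int_\Omega\varphi\,d\mu\right)\mathbf{1}$ is bounded on $L^1(\Omega)$ with norm at most $1$, hence $f(0)P_0$ contributes at most $|f(0)|$ to the weak $(1,1)$ norm. Adding the two contributions gives the claimed estimate. This bookkeeping around the zero eigenvalue is precisely the content of \cite[Remark 1, page 451]{DOS}, and is the only step beyond a verbatim application of the multiplier theorem.
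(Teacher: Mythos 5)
Your proposal is correct and follows exactly the paper's route: the paper proves Corollary \ref{hc} simply by citing \cite[Theorem 1.3, page 450 and Remark 1, page 451]{DOS} together with the Gaussian bound of Corollary \ref{gub} and the doubling property \eqref{vdi}, which is precisely what you do, only with the hypotheses (doubling with exponent $\delta$, self-adjointness, the Gaussian estimate) and the treatment of the atom at $0$ (the $|f(0)|P_0$ term, which is the content of Remark 1 in \cite{DOS}) written out explicitly.
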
 

A particular case of this corollary concerns the imaginary powers of $\mathcal{A}$. Precisely, $\mathcal{A}^{ir}$, $r\in \mathbb{R}$, extends to a bounded operator on $L^p(\Omega )$, $p\in (1,\infty )$, and, for any $\epsilon >0$, there is a constant $C_\epsilon >0$ so that
\begin{equation}\label{imp}
\|\mathcal{A}^{ir}\|_{\mathscr{B}(L^p(\Omega ))}\leq C_\epsilon (1+|r|)^{\delta |1/2-1/p|+\epsilon}.
\end{equation}
Indeed, an application of the previous corollary with $f(\lambda) = \lambda^{ir}$ shows that 
\[
\|\mathcal{A}^{ir} \|_{L^1(\Omega )\rightarrow L^1_w(\Omega )}\leq C_\epsilon (1+|r|)^{\delta/2 +\epsilon}.
\]
On the other hand, the standard functional calculus for  self-adjoint operators gives
 \[
\|\mathcal{A}^{ir}\|_{\mathscr{B}(L^2(\Omega ))}\leq 1.
\]
Therefore, \eqref{imp} follows by interpolation. We refer to \cite[Corollary 7.24, page 239]{Ou} for more details. 

\section{Proof of the main theorem}\label{section2}

\begin{proof}[Proof of Theorem \ref{pthm}] (1) We first recall that $-\mathcal{A}$ generates on $L^2(\Omega )$ an analytic semigroup $e^{-t\mathcal{A}}$. Note that  
\[
e^{-t\mathcal{A}}=\int_0^{+\infty} e^{-t\lambda} dE_\lambda ,\;\; t\geq 0.
\] 

\begin{proposition}\label{proposition1}
(a) $e^{-t\mathcal{A}}$ is positivity preserving.
\\
(b) $e^{-t\mathcal{A}}$ is a contraction on $L^p(\Omega )=L^p(\Omega ,d\mu )$ for all $1\leq p\leq \infty$ and $t \geq 0$.
\end{proposition}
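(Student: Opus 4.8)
The plan is to deduce both assertions from the Beurling--Deny criteria applied to the form $\mathfrak{a}$ to which $\mathcal{A}$ is associated. Recall that $\mathfrak{a}(f,g)=\int_\Omega \langle \nabla f,\nabla g\rangle\, d\mu$ is symmetric, densely defined, closed and non-negative on $L^2(\Omega)$, so that $e^{-t\mathcal{A}}$ is the self-adjoint semigroup it generates. My strategy is to show that $\mathfrak{a}$ is a Dirichlet form, which yields at once that the semigroup is positivity preserving and submarkovian (hence $L^\infty$-contractive); the contractivity on the remaining values of $p$ will then follow by duality and interpolation.

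For the positivity in (a), I would verify the first Beurling--Deny criterion: for every real-valued $u\in H^1(\Omega)=D(\mathfrak{a})$ one has $u^+\in H^1(\Omega)$ and $\mathfrak{a}(u^+,u^-)\leq 0$, where $u=u^+-u^-$. The decisive computation is that $\nabla u^+=\mathbf{1}_{\{u>0\}}\nabla u$ and $\nabla u^-=-\mathbf{1}_{\{u<0\}}\nabla u$ almost everywhere; since these gradients are supported on disjoint sets, $\langle \nabla u^+,\nabla u^-\rangle=0$ a.e., whence $\mathfrak{a}(u^+,u^-)=0$ and the criterion holds with equality.

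For the contractivity in (b), I would check the second Beurling--Deny criterion: for real $u\in H^1(\Omega)$ one has $u\wedge 1\in H^1(\Omega)$ and $\mathfrak{a}(u\wedge 1,(u-1)^+)\geq 0$. Writing $u\wedge 1=u-(u-1)^+$ gives $\nabla(u\wedge 1)=\mathbf{1}_{\{u\leq 1\}}\nabla u$ and $\nabla(u-1)^+=\mathbf{1}_{\{u>1\}}\nabla u$, so once again the supports are disjoint and $\mathfrak{a}(u\wedge 1,(u-1)^+)=0\geq 0$. The two criteria together show that $\mathfrak{a}$ is a Dirichlet form, so $e^{-t\mathcal{A}}$ is submarkovian: it is positivity preserving, proving (a), and it is a contraction on $L^\infty(\Omega)$ as well as on $L^2(\Omega)$. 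Because the semigroup is self-adjoint, the $L^\infty$ bound dualizes to an $L^1$ contraction, and the Riesz--Thorin interpolation theorem then delivers contractivity on every $L^p(\Omega)$ with $1\leq p\leq \infty$, proving (b).

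The step I expect to require the most care is the structural input underlying every computation above, namely that the lattice operations $u\mapsto u^+$ and the truncation $u\mapsto u\wedge 1$ map $H^1(\Omega)$ into itself with the stated formulas for their gradients. Since $H^1(\Omega)$ is defined here as the closure of $C_0^\infty(\overline{\Omega})$ in the graph norm, one must justify that these nonlinear operations are stable on this closure and commute with $\nabla$ in the expected way; on the manifold this is obtained by working in the Lipschitz boundary charts and establishing, by approximation, the chain rule for the composition of a Lipschitz function with an $H^1$-function. Once this fact is available, the verification of both criteria is immediate, as each cross term is an integral of $\langle \nabla v,\nabla w\rangle$ with $\nabla v$ and $\nabla w$ supported on disjoint sets.
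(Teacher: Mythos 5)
Your proposal is correct and follows essentially the same route as the paper: both verify the Beurling--Deny criteria for the form $\mathfrak{a}$ via the stability of the lattice operations $u\mapsto u^{+}$ and $u\mapsto u\wedge 1$ on $H^{1}(\Omega)$ and the locality of the gradient. The only difference is one of packaging: the paper checks the equivalent quadratic-form inequalities $\mathfrak{a}(|u|,|u|)=\mathfrak{a}(u,u)$ and the truncation property and then cites Davies' Theorems 1.3.2 and 1.3.3 (the latter already yielding contractivity on all $L^{p}$), whereas you use the cross-term form of the criteria and supply the duality and Riesz--Thorin interpolation step explicitly.
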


\begin{proof}
(a) We recall that if $u\in H^1(\Omega )$, then $u^+,u^-\in H^1(\Omega )$ and $\nabla |u|=\nabla u^++\nabla u^-$. Hence
\[
\mathfrak{a}(|u|,|u|)=\mathfrak{a}(u,u),\;\; u\in H^1(\Omega ).
\]
In light of \cite[Theorem 1.3.2, page 12]{Da}, we deduce that $e^{-t\mathcal{A}}$ is positivity preserving.

\medskip
(b) If $0\leq u\in H^1(\Omega )$, then one can check in a straightforward manner that $u\wedge 1=\min (u,1)\in H^1(\Omega )$ and
\[
\nabla (u\wedge 1)=\left\{ \begin{array}{ll} \nabla u\; &{\rm in}\;  [u>1],\\ 0 &{\rm in}\;  [u\leq 1]. \end{array}\right.
\]
Therefore $e^{-t\mathcal{A}}$ is a contraction semigroup on $L^p(\Omega )$ for all  $1\leq p\leq \infty$ 
 by \cite[Theorem 1.3.3, page 14]{Da}.
\end{proof}

This proposition says that $e^{-t\mathcal{A}}$ is a symmetric Markov semigroup.

\smallskip
We have for any integer $k$,
\begin{equation}\label{hk1}
\mathcal{A}^ke^{-t\mathcal{A}}= \int_0^{+\infty} \lambda ^ke^{-t\lambda} dE_\lambda .
\end{equation}
Therefore, $e^{-t\mathcal{A}}f\in D(\mathcal{A})$, for all  $f\in L^2(\Omega )$ and $t>0$.

\smallskip
On the other hand, we get from the usual interior elliptic regularity
\[
\bigcap_{k\in \mathbb{N}}D(\mathcal{A}^k)\subset C^\infty (\Omega ).
\]
Hence, $x\rightarrow e^{-t\mathcal{A}}f(x)$ belongs to $C^\infty (\Omega )$ for any fixed $t>0$. But, $t\rightarrow e^{-t\mathcal{A}}f$ is analytic on $(0,\infty )$ with values in the Hilbert space $D(\mathcal{A}^k)$. Consequently, $(t,x)\rightarrow e^{-t\mathcal{A}}f(x)$ is in $C^\infty ((0,\infty )\times \Omega )$.

\smallskip
From now on, the scalar product of $L^2(\Omega )$ will be denoted by $(\cdot ,\cdot )_{2,\Omega}$ and the norm of $L^p(\Omega )$, $1\leq p\leq \infty$, by $\|\cdot \|_{p,\Omega}$. The norm of $L^p(\mathcal{M})$ is simply denoted by $\|\cdot \|_p$, $1\leq p\leq \infty$.

\smallskip
We fix $t>0$. Using that $\lambda \rightarrow \lambda ^ke^{-t\lambda}$ attains its maximum value at $\lambda =k/t$, we obtain  from \eqref{hk1} for $f\in L^2(\Omega )$
\begin{align}
\|\mathcal{A}^ke^{-t\mathcal{A}}f\|_{2,\Omega}^2&=\int_0^\infty [\lambda ^ke^{-\lambda t}]^2d\|E_\lambda f\|_{2,\Omega}^2\label{hk2}
\\
&\leq \sup_{\lambda >0}[\lambda ^ke^{-\lambda t}]^2\int_0^\infty d\|E_\lambda f\|_{2,\Omega}^2\nonumber
\\
&\leq \left(\frac{k}{t}\right)^{2k}e^{-2k}\|f\|_{2,\Omega}^2. \nonumber
\end{align}
Again by the interior elliptic regularity, $D(\mathcal{A}^k)$ is continuously embedded in $C(\Omega )$ when $k$ is sufficiently large. This and \eqref{hk2} entails: for any $\omega \Subset \Omega$, there exists $C=C(\Omega ,\omega ,k)$ so that
\begin{equation}\label{hk3}
\sup_{\overline{\omega}}|e^{-t\mathcal{A}}f|\leq  \frac{C}{t^k}^2\|f\|_{2,\Omega}.
\end{equation}
In particular, for any fixed $x\in \Omega$ and $t>0$, the (linear) mapping $f\rightarrow e^{-t\mathcal{A}}f(x)$ is continuous. We can then apply the Riesz representation theorem to deduce that there exists $\ell (t,x)\in L^2(\Omega )$ so that
\[
e^{-t\mathcal{A}}f(x)=(\ell (t,x),f)_{2,\Omega},\;\; x\in \Omega ,\; t>0.
\]
Therefore, $(t,x)\rightarrow \ell (t,x)\in L^2(\Omega )$ is weakly $C^\infty$ on $(0,\infty )\times \Omega$ and hence norm $C^\infty$ by \cite[Section 1.5]{da}. 

\smallskip
Let $h(t,x,y)=(\ell (t/2,x),\ell (t/2,y))$. Then $h\in C^\infty ((0,\infty )\times \Omega\times \Omega )$ and 
\[
\left(e^{-t\mathcal{A}}f,g\right)_{2,\Omega}=\left(e^{-\frac{t}{2}\mathcal{A}}f,e^{-\frac{t}{2}\mathcal{A}}g\right)_{2,\Omega}=\int_\Omega \int_\Omega h(t,x,y)f(x)g(y)d\mu(x)d\mu (y),\;\; f,g\in C_0^\infty (\Omega ).
\]
By the density of $C_0^\infty (\Omega )$ in $L^2(\Omega )$, we derive from the last identity that
\[
e^{-t\mathcal{A}}f(x)=\int_\Omega h(t,x,y)f(x)d\mu (x),\;\; t>0,\; x\in \Omega ,\; f\in L^2(\Omega ).
\]

(2) We start with the following proposition. 

\begin{proposition}\label{proposition2}
$e^{-t\mathcal{A}}$ satisfies the Davies-Caffney (abbreviated to DG in the sequel) property. That is, for any $t>0$, $U_1$, $U_2$ open subsets of $\Omega$, $f\in L^2(U_1,d\mu )$ and $g\in L^2(U_2,d\mu )$,
\[
\left| (e^{-t\mathcal{A}}f,g)_{2,\Omega}\right|\leq e^{-\frac{r^2}{4t}}\|f\|_{2,\Omega }\|g\|_{2,\Omega}.
\]
Here
\[
r={\rm dist}(U_1,U_2)=\inf_{x\in U_1,\; y\in U_2}d(x,y).
\]
\end{proposition}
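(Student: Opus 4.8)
The plan is to use Davies' exponential perturbation (weighted energy) method. Fix a bounded real-valued Lipschitz function $\rho$ on $\Omega$ normalised so that $|\nabla \rho|\leq 1$ almost everywhere, and a real parameter $\alpha$. Since $\rho\in W^{1,\infty}(\Omega)$, multiplication by $e^{\alpha\rho}$ is a bounded operator on $L^2(\Omega)$ which maps $H^1(\Omega)=D(\mathfrak{a})$ into itself. Because $e^{-t\mathcal{A}}$ is a symmetric Markov semigroup (Proposition \ref{proposition1}), its kernel is real and it suffices to treat real $f$ and $g$, extended by zero outside $U_1$, $U_2$. The heart of the argument is the weighted $L^2$ bound
\[
\|e^{\alpha\rho}e^{-t\mathcal{A}}f\|_{2,\Omega}\leq e^{\alpha^2 t}\|e^{\alpha\rho}f\|_{2,\Omega},\qquad t>0,
\]
which I would establish first and then optimise in $\alpha$ for a suitable choice of $\rho$.

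To obtain this bound, write $u(t)=e^{-t\mathcal{A}}f$ and set $E(t)=\int_\Omega e^{2\alpha\rho}u(t)^2\,d\mu$. For $t>0$ we have $u(t)\in D(\mathcal{A})$ and $t\mapsto u(t)$ is differentiable into $L^2(\Omega)$ with $u'(t)=-\mathcal{A}u(t)$, so $E$ is differentiable and, using that $v:=e^{2\alpha\rho}u(t)\in H^1(\Omega)$ together with the form identity $(\mathcal{A}u,v)_{2,\Omega}=\mathfrak{a}(u,v)$,
\[
E'(t)=-2\,\mathfrak{a}\bigl(u(t),e^{2\alpha\rho}u(t)\bigr).
\]
Expanding $\nabla(e^{2\alpha\rho}u)=e^{2\alpha\rho}(\nabla u+2\alpha\,u\,\nabla\rho)$ gives
\[
E'(t)=-2\int_\Omega e^{2\alpha\rho}|\nabla u|^2\,d\mu-4\alpha\int_\Omega e^{2\alpha\rho}\,u\,\langle\nabla u,\nabla\rho\rangle\,d\mu.
\]
Cauchy--Schwarz with $|\nabla\rho|\leq 1$ and Young's inequality $4|\alpha|\,|u|\,|\nabla u|\leq 2|\nabla u|^2+2\alpha^2 u^2$ absorb the gradient terms and leave $E'(t)\leq 2\alpha^2 E(t)$. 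Gronwall's lemma then yields $E(t)\leq e^{2\alpha^2 t}E(0)$, which is the claimed weighted bound.

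Finally I would specialise the weight. Assume $r=\mathrm{dist}(U_1,U_2)>0$ (otherwise the estimate is immediate from the $L^2$-contractivity in Proposition \ref{proposition1}), and take $\rho(x)=\min\{d(x,U_1),r\}$, which is $1$-Lipschitz, bounded, vanishes on $U_1$ and equals $r$ on $U_2$. For $\alpha\geq 0$ and $f$ supported in $U_1$ we have $e^{\alpha\rho}f=f$, while on $U_2$ one has $u(t)=e^{-\alpha r}e^{\alpha\rho}u(t)$; hence, since $g$ is supported in $U_2$,
\[
\bigl|(e^{-t\mathcal{A}}f,g)_{2,\Omega}\bigr|\leq e^{-\alpha r}\,\|e^{\alpha\rho}u(t)\|_{2,\Omega}\,\|g\|_{2,\Omega}\leq e^{\alpha^2 t-\alpha r}\,\|f\|_{2,\Omega}\,\|g\|_{2,\Omega}.
\]
Choosing $\alpha=r/(2t)$ minimises the exponent and produces exactly the factor $e^{-r^2/(4t)}$.

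The routine computations (expanding the gradient, Young's inequality, Gronwall) are harmless; the point requiring care is the differential identity for $E'(t)$. The main obstacle is justifying the integration by parts with a merely Lipschitz weight and no explicit boundary term: this is handled not by a pointwise Green's formula but through the bilinear form, since $u(t)\in D(\mathcal{A})$ already encodes the Neumann condition $\partial_\nu u=0$ and $e^{2\alpha\rho}u(t)\in D(\mathfrak{a})$, so that $(\mathcal{A}u,e^{2\alpha\rho}u)_{2,\Omega}=\mathfrak{a}(u,e^{2\alpha\rho}u)$ is legitimate. One must also check that $t\mapsto u(t)$ is differentiable into $H^1(\Omega)$, which follows from the analyticity established in part (1) together with $u(t)\in D(\mathcal{A})\subset D(\mathfrak{a})$, in order to differentiate $E$ under the integral sign.
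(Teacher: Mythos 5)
Your proof is correct, but it takes a genuinely different route from the paper: the paper does not prove Proposition \ref{proposition2} at all, it simply refers to \cite[Theorem 3.3, page 515]{CS}, whose argument belongs to the Phragm\`en--Lindel\"of/finite-propagation-speed circle of ideas that Coulhon and Sikora develop (and which the paper exploits again later via \cite[Corollary 5.4]{CS}). You instead give a self-contained Davies--Gaffney argument by exponential perturbation: the weighted estimate $\|e^{\alpha\rho}e^{-t\mathcal{A}}f\|_{2,\Omega}\leq e^{\alpha^2 t}\|e^{\alpha\rho}f\|_{2,\Omega}$, proved by differentiating $E(t)=\|e^{\alpha\rho}u(t)\|_{2,\Omega}^2$ and using the form identity $(\mathcal{A}u,v)_{2,\Omega}=\mathfrak{a}(u,v)$ for $u\in D(\mathcal{A})$, $v\in D(\mathfrak{a})$, followed by optimisation in $\alpha$ with the weight $\rho=\min\{d(\cdot,U_1),r\}$. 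All the key points are handled properly: the form identity legitimately replaces any pointwise integration by parts (so the Lipschitz boundary and the Neumann condition cause no trouble), multiplication by $e^{\alpha\rho}$ with $\rho$ bounded and $1$-Lipschitz does preserve $H^1(\Omega)$, Young's inequality absorbs the cross term with the correct constants, and the choice $\alpha=r/(2t)$ yields exactly the sharp factor $e^{-r^2/(4t)}$ required by the statement (sharpness matters here, since Theorem \ref{pthm} carries the constant $4$ in the exponent). One small remark: differentiability of $t\mapsto u(t)$ into $L^2(\Omega)$ is all you need to compute $E'(t)$; the $H^1$-differentiability you worry about at the end is not required, since the gradient terms enter only through the form $\mathfrak{a}$ evaluated at fixed $t$. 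What your approach buys is a short, elementary, fully self-contained proof relying only on the variational definition of $\mathcal{A}$; what the paper's citation buys is brevity and consistency with the Coulhon--Sikora machinery used in the rest of the argument, where the same reference upgrades the on-diagonal bound \eqref{dub} to the full Gaussian bound.
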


\begin{proof}
We omit  the proof which is similar to that of \cite[Theorem 3.3, page 515]{CS}.
\end{proof} 

We now observe that $\Omega$ has the $1$-extension property (see for instance \cite[Theorem C]{MMS}). In other words, there exists $\mathcal{E}\in \mathscr{B}(H^1(\Omega ),H^1(\mathcal{M}))$ satisfying $(\mathcal{E}u)_{|\Omega}=u$, $u\in H^1(\Omega )$.

\smallskip
On the other hand, since $\mathcal{M}$ has the volume doubling property and the Gaussian bound \eqref{gauM}, it follows from 
\cite[Theorem 1.2.1]{BCS}  that the following Gagliado-Nirenberg type inequality holds: for $2<q\leq +\infty$, there exists a constant $C>0$ so that
\begin{equation}\label{GN}
\|fV^{\frac{1}{2}-\frac{1}{q}}(\cdot ,r)\|_q\leq C\left (\|f\|_2+r\||\nabla f|\|_2^2\right),\;\; r>0,\; f\in C_0^\infty (\mathcal{M}).
\end{equation}

In light of \eqref{GN} and using that $V_\Omega (\cdot ,r)\leq V(\cdot ,r)$ in $\Omega$, we obtain for
 $r>0$, $f\in H^1(\Omega )$ and fixed $2<q\leq \infty$, 
\begin{align*}
\|fV_\Omega ^{\frac{1}{2}-\frac{1}{q}}(\cdot ,r)\|_{q,\Omega } &\leq \|fV^{\frac{1}{2}-\frac{1}{q}}(\cdot ,r)\|_{q,\Omega }
\\
&\leq \|(\mathcal{E}f)V^{1/2-1/q}\|_q
\\
&\leq C\left( \|\mathcal{E}f\|_2+r\| |\nabla (\mathcal{E}f)|\|_2\right)
\\
& \leq C\|\mathcal{E}\|  \left( (1+r)\|f\|_{2,\Omega }+r\| |\nabla f|\|_{2,\Omega }\right) .
\end{align*}
Here $\|\mathcal{E}\|$ is the norm of $\mathcal{E}$ in $\mathscr{B}(H^1(\Omega ),H^1(\mathcal{M}))$. Hence
\begin{equation}\label{gn}
\|fV_\Omega (\cdot ,r)^{\frac{1}{2}-\frac{1}{q}}\|_{q,\Omega }\leq C \left( \|f\|_{2,\Omega }+r\| |\nabla f|\|_{2,\Omega }\right),\;\; r>0,\; f\in H^1(\Omega ),
\end{equation}
where we used the fact that $V_\Omega (\cdot ,r)=V_\Omega (\cdot ,r_0)=\mu (\Omega )$, for all $r\geq r_0=\textrm{diam}\, (\Omega )$.

\smallskip
We then apply \cite[Theorem 1.2.1]{BCS} to derive that $h$ possesses a diagonal upper bound. In other words, there exists a constant $C>0$ so that
\begin{equation}\label{dub}
h(t,x,y)\leq \frac{C}{\left[V_\Omega (x,\sqrt{t})V_\Omega (x,\sqrt{t})\right]^{1/2}},\;\; t>0,\; x, y\in \Omega .
\end{equation}
Since $e^{-t\mathcal{A}}$ has the $DG$ property by Proposition \ref{proposition2} we get, from \cite[Corollary 5.4, page 524]{CS},
\[
h(t,x,y)\leq \frac{eC}{\left[V_\Omega (x,\sqrt{t})V_\Omega (y,\sqrt{t})\right]^{1/2}}\left( 1+\frac{d^2(x,y)}{4t}\right)^{\delta}e^{-\frac{d^2(x,y)}{4t}},\;\; t>0,\; x,y\in \Omega .
\]
The proof is then complete.
\end{proof}


\section{Domains with volume doubling property}\label{sec3}

\subsection*{Flat case}  It is known that any bounded Lipschitz domain of $\mathbb{R}^n$ satisfies the volume doubling property. 
We discuss this again here. We consider $\mathbb{R}^n$ equipped with its euclidean metric $g=(\delta_{ij})$. Let
\[
\mathscr{C}(y,\xi ,\epsilon )=\{ z\in \mathbb{R}^n;\; (z-y)\cdot \xi \geq (\cos \epsilon )|z-y|,\; 0<|y-z|<\epsilon \},
\]
where $y\in\mathbb{R}^n$, $\xi \in \mathbb{S}^{n-1}$ and $0<\epsilon $. That is, $C(y,\xi ,\epsilon)$  is the cone, of dimension $\epsilon$,  with vertex $y$, aperture $\epsilon$ and directed by $\xi$.

\smallskip
We say that $\Omega$ has the $\epsilon$-cone property if
\[
\textrm{for any}\; x\in \Gamma ,\; \textrm{there exists}\; \xi _x\in \mathbb{S}^{n-1}\; \textrm{so that, for all}\; y\in \overline{\Omega}\cap B(x,\epsilon),\; \mathscr{C}(y,\xi _x,\epsilon )\subset \Omega .
\]

Let $\Omega $ be a bounded Lipschitz domain of $\mathbb{R}^n$. Then, by \cite[Theorem 2.4.7, page 53]{HP}, $\Omega$ has the $\epsilon$-cone property, for some $\epsilon >0$. This implies that there exist $c_0>0$ and $\rho >0$ so that
\begin{equation}\label{Ex1}
V_\Omega (x,r)=|B(x,r)\cap \Omega |\ge c_0r^n,\;\; x\in \Omega ,\; 0<r\le \rho .
\end{equation}

An immediate  consequence is that $\Omega$ (equipped with its euclidean metric) satisfies the volume doubling property.  
Indeed, let $r_0=\textrm{diam}\, (\Omega )$ and $0<r\leq s$. Then \eqref{Ex1} entails
\begin{equation}\label{Ex2}
V_\Omega (x,s)\leq c_1s^n=c_1\left( \frac{s}{r}\right)^nr^n\leq \frac{c_1}{c_0}\left( \frac{s}{r}\right)^nV_\Omega (x,r),\;\; 0<r\le \rho ,
\end{equation}
where $c_1=|B(0,1)|$.

\smallskip
Also, when $\rho <r_0$,
\begin{equation}\label{Ex3}
V_\Omega (x,s)\leq \frac{c_1}{c_0}\left( \frac{s}{\rho }\right)^nV_\Omega (x,\rho )\le \frac{c_1}{c_0}\left(\frac{r_0}{\rho}\right)^n\left( \frac{s}{r}\right)^nV_\Omega (x,r ),\;\; \rho <r\le r_0.
\end{equation}
Finally, it is obvious that
\begin{equation}\label{Ex4}
V_\Omega (x,s)=|\Omega |=V_\Omega (x,r_0)\leq \left(\frac{s}{r}\right)^n V_\Omega (x,r ),\;\; r>r_0.
\end{equation}
Estimates \eqref{Ex2}, \eqref{Ex3} and \eqref{Ex4} show the volume doubling property.

\subsection*{Manifold with sectional curvature bounded from above}

Let $T_x\mathcal{M}$ be the tangent space at $x\in \mathcal{M}$, $\mathbb{S}_x\subset T_x\mathcal{M}$ the unit tangent sphere and $S\mathcal{M}$ the unit tangent bundle. Let $\Phi_t$ be the geodesic flow with phase space $S\mathcal{M}$. That is, for any $t\geq 0$, 
\[
\Phi_t:S\mathcal{M}\rightarrow S\mathcal{M}: (x,\xi )\in S\mathcal{M} \rightarrow\Phi_t (x,\xi )=\left( \gamma_{x,\xi }(t),\dot{\gamma}_{x,\xi }(t)\right).
\]
Here $\gamma_{x,\xi}:[0,\infty )\rightarrow \mathcal{M}$ is the unit speed geodesic starting at $x$ with tangent unit vector $\xi$ and $\dot{\gamma}_{x,\xi}(t)$ is the unit tangent vector to $\gamma_{x,\xi}$ at $\gamma_{x,\xi}(t)$ in the forward $t$ direction.

\smallskip
If $(x,\xi)\in S\mathcal{M}$, we denote by $r(x,\xi)$ the distance from $x$ to the cutlocus in the direction of $\xi$:
\[
r(x,\xi )=\inf\{t>0;\; d(x,\Phi_t(x,\xi ))<t\}.
\]

We fix $\delta \in (0,1]$ and $r>0$.  Following \cite{Sa}, a $(\delta ,r)$-cone at $x\in \mathcal{M}$ is the set of the form
\[
\mathscr{C}(x,\omega _x,r)=\{ y=\gamma_{x,\xi}(s);\; \xi\in \omega _x,\; 0\leq s<r\},
\]
where $\omega _x$ is a subset of $\mathbb{S}_x$ so that $r<r(x,\xi )$ for all $\xi \in \omega_x$ and $|\omega _x |\geq \delta$ (here $|\omega _x|$ is the volume of $\omega _x$ with respect to the normalized measure on the sphere $\mathbb{S}_x$).

\smallskip
A domain $D$ which contains a $(\delta ,r)$-cone at $x$ for any $x \in D$ is said to satisfy
the interior $(\delta ,r)$-cone condition.


\smallskip
Let
\[
s_\kappa (r)=\left\{
\begin{array}{ll}
\left(\frac{\sin (\sqrt{\kappa}r}{\sqrt{\kappa}}\right)^{n-1}\;\; &\textrm{if}\; \kappa >0,
\\
r^{n-1} &\textrm{if}\; \kappa =0,
\\
\left(\frac{\sinh (\sqrt{-\kappa}r}{\sqrt{-\kappa}}\right)^{n-1}\  &\textrm{if}\; \kappa <0.
\end{array}
\right.
\]

\smallskip
We  assume that the sectional curvature of $\mathcal{M}$ is bounded above by a constant $\kappa$, $\kappa \in \mathbb{R}$,  and $\Omega$  satisfies the interior $(\delta ,r)$-cone condition. Let $J(x,\xi ,t)$ be the density of the volume element in geodesic coordinates around $x$. That is
\[
dV(y)=J(x,\xi ,t)d_{\mathbb{S}_x}dt,\;\; y=\gamma _{x,\xi}(t),\; t<r(x,\xi ).
\]
By an extension of G\"unther's comparison theorem (see for instance \cite{KK}),  $J$ satisfies the following uniform lower bound
\[
J(x,\xi ,t)\geq s_\kappa (t).
\]
Consequently, for some $r_0>0$,
\begin{equation}\label{ce}
V_\Omega (x,r)\geq V (\mathscr{C}(x,\omega _x,r))\geq c_0 r^n,\;\; x\in \Omega ,\; 0<r\leq r_0 ,
\end{equation}

We proceed similarly to the flat case  to prove the following lemma.

\begin{lemma}\label{lemEx2}
Assume that $\mathcal{M}$ has sectional curvature bounded from above  and satisfies 
following volume growth condition 
\[
V(x,r)\leq c_1r^n, \;\; 0<r\leq r_1,
\]
for some constants $c_1$ and $r_1$. If $\Omega$ is  of finite diameter and satisfies the $(\delta ,r)$-cone condition, then $V_\Omega$ is doubling.
\end{lemma}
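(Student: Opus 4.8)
The plan is to follow the flat case almost verbatim, the only genuinely new feature being that the polynomial upper bound on the volume is available here only at small scales. The two ingredients I would start from are: the interior cone lower bound \eqref{ce}, which gives $V_\Omega(x,r)\ge c_0 r^n$ for $0<r\le r_0$ uniformly in $x\in\Omega$; and the volume growth hypothesis together with $V_\Omega(\cdot,r)\le V(\cdot,r)$, which gives $V_\Omega(x,r)\le c_1 r^n$ for $0<r\le r_1$. Before the case analysis I would record that $\mu(\Omega)<\infty$: since $\Omega$ has finite diameter $R=\textrm{diam}\,(\Omega)$, it is contained in a ball $B(x,R)$, and on a complete manifold such a ball is relatively compact by Hopf--Rinow, hence of finite volume. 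Writing $V_\infty=\mu(\Omega)$, one has $V_\Omega(x,s)=V_\infty$ for every $s\ge R$. Finally set $\rho=\min(r_0,r_1)$, so that both polynomial bounds are simultaneously valid on $(0,\rho]$.

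Next I would prove \eqref{vdi} with exponent $\delta=n$ by a short case analysis on the larger radius $s$, for $0<r\le s$. When $s\le\rho$ both polynomial bounds apply and, exactly as in \eqref{Ex2},
\[
V_\Omega(x,s)\le c_1 s^n=c_1\Big(\frac{s}{r}\Big)^n r^n\le \frac{c_1}{c_0}\Big(\frac{s}{r}\Big)^n V_\Omega(x,r).
\]
When $s>\rho$ the polynomial upper bound may fail, so I would instead bound $V_\Omega(x,s)\le V_\infty$ and obtain the required lower bound from monotonicity of $r\mapsto V_\Omega(x,r)$: if $r\le\rho$ then $V_\Omega(x,r)\ge c_0 r^n$, while if $r>\rho$ then $V_\Omega(x,r)\ge V_\Omega(x,\rho)\ge c_0\rho^n$. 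In either situation $(s/r)^n V_\Omega(x,r)\ge c_0\rho^n$, so that
\[
V_\Omega(x,s)\le V_\infty=\frac{V_\infty}{c_0\rho^n}\,c_0\rho^n\le \frac{V_\infty}{c_0\rho^n}\Big(\frac{s}{r}\Big)^n V_\Omega(x,r).
\]
Taking $K=\max\{c_1/c_0,\;V_\infty/(c_0\rho^n)\}$ then gives \eqref{vdi} with $\delta=n$.

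The only point that differs from the flat argument is the replacement of the global Euclidean identity $|B(x,s)|=c_1 s^n$ by the two-regime behaviour of $V_\Omega(x,s)$: a polynomial estimate at small scales and the constant value $V_\infty$ beyond the diameter. I expect the main (rather modest) obstacle to be checking that the constants $c_0,r_0$ in \eqref{ce} and $c_1,r_1$ in the growth hypothesis are uniform in $x$, so that $\rho$ and $K$ may be chosen independently of $x\in\Omega$. This uniformity is precisely what the curvature bound and the $(\delta,r)$-cone condition supply, through the G\"unther comparison estimate $J(x,\xi,t)\ge s_\kappa(t)$ used to derive \eqref{ce}.
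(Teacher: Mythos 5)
Your proof is correct and is essentially the paper's own argument: the paper gives no detailed proof here, saying only ``we proceed similarly to the flat case,'' and your write-up is precisely that adaptation, combining the cone lower bound \eqref{ce} with the small-scale volume upper bound and the constancy of $V_\Omega(x,\cdot)$ beyond the diameter. Your reorganization of the case split around the larger radius $s$ (rather than the smaller radius $r$ as in \eqref{Ex2}--\eqref{Ex4}) is exactly the modification needed because the polynomial upper bound is now available only at scales $s\le r_1$, and your uniformity remark about $c_0, r_0, c_1, r_1$ matches the role the paper assigns to the curvature bound and the cone condition.
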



\bigskip

\end{document}